\title{Sorting using non-binary comparisons}
\author[mem]{Richard A. B. Johnson}
\author[ceu]{Gabor Meszaros}
\address[mem]{University of Memphis, Memphis TN, USA, rabjohnson@gmail.com}
\address[ceu]{Central European University, Budapest, Hungary, meszaros\textunderscore gabor@phd.ceu.edu}
\newcounter{apple}
\newcounter{pear}
\theoremstyle{definition}
\newtheorem{Theorem}{Theorem}
\newtheorem{Lemma}[Theorem]{Lemma}
\newtheorem{Observation}[Theorem]{Observation}
\newtheoremstyle{mystyle}
  {3pt}{3pt}{}{}{\bfseries}{.}{.5em}
  {\thmname{#1}\ \theapple.\thmnumber{#2}\thmnote{. #3}}
\theoremstyle{mystyle}
\newtheorem{stage}[pear]{Stage}
\newenvironment{method}[0]{\textit{Method:}}{\endproof}
\begin{document}

\begin{abstract}
Given a set of $n$ elements we investigate how much of the ordering can be determined by an instrument that takes $k$ elements and returns the $t_1^\text{st}, t_2^\text{nd}, \ldots, t_s^\text{th}$ of them. We consider this question in both an on-line sense, in which future choices can depend on previous results, and off-line where all the queries must be chosen initally before knowing any results. 
\end{abstract}

\begin{keyword}
sorting \sep scales \sep offline algorithm
\end{keyword}

\maketitle

Our aim in this paper is to study the following question. Assume a user has an ordered set of $n$ elements in which the ordering is fixed but not known (for example distinguishable but unmarked coins of unknown distinct weights), and that he wishes to determine the ordering. We denote this base set $X$, containing elements $x_1$, $x_2 \ldots x_n$. He is given a scale that accepts as input a $k$-set of elements and returns a fixed subset of them according to the ordering, for example it might return a subset of size $s$ that contains the $t_1^\text{st}, t_2^\text{nd}, \ldots, t_s^\text{th}$ elements. We would call such a scale a $(k, t_1, \ldots, t_s)$ scale, and wish to know what one can determine about the ordering of the elements from repeated use of such a scale. We shall refer to the process of using the scale on a $k$-set as \emph{querying} that set. These have been previously studied in the case where $k=2$, in which case they are known as \emph{binary} scales. In this paper we expand this to consider the case where $k$ is greater than 2, and give algorithms to efficiently determine as much of the ordering as possible. We analyse the order of the number of queries for fixed $k$ and large $n$ in both the online and offline settings.\\

Clearly the user cannot completely discover the ordering, as you cannot determine the ordering of the first $t_1-1$ elements or the final $(n-t_s)$ elements, if $t_1 > 1$ and $(n - t_s) < 1$. Let us call these sets $S$ for the initial segment of `small' elements and $L$ for the final segment of `large' elements. Additionally note that if the scale is symmetric (in the sense that the elements that it returns are symmetric around the midpoint of $k$ i.e. $t_1 = k-t_s, t_2 = k - t_{s-1}$ etc) then the ordering cannot be fully determined for the remaining elements, as the results of any query would be the same if the ordering was reflected. We therefore ignore this case, and assume assymmetric instruments in general. We also assume that the scale returns an unordered set, $\{t_1, \ldots, t_s\}$, rather than an ordered one. This is because we show that even with an unordered set you can recover the full ordering of the elements; an ordered output would be strictly stronger, and so also able to do the same.\\

A related question was considered by Hannasch, Kim and McLaughlin \cite{HKL10} in 2010. They considered an instrument which again accepted an input of $k$-elements but returned the complete ordering of the input set. They asked, given such an instrument, how long it would take to determine to first $t$ elements of an $n$-set, which they called $S(n, k, t)$. Although related their instrument is stronger, and the question asked weaker, so we consider our problem a more general version of theirs. We do not know of any prior work done on this, stronger, question.\\

This paper is structured as follows. In section \ref{OnlineSection} we consider online algorithms, where the sets submitted to future queries can depend on past results. Beginning with the case $s=1$, i.e. where the scales output a single element, we show that it is possible to determine the ordering of the elements (excluding the ordering of $S$ and $L$) in $O(n \log n)$ time, the same order of bound as in the binary case. We give this constant explicitly, showing that it is an improvement over the binary case. We also investigate the case where $s>1$, and give an explicit algorithm for determining the ordering in this case.\\ 

In section \ref{OfflineSection} we consider offline algorithms, where all the queries must be specified in advance and then the full set of results are returned simultaneously. This could be applicable in a situation where queries have to be sent to a laboratory to run overnight. Obviously this requires more queries in general. In the case where $s=1$, i.e. where we are using a $(k, t)$-scale, we outline an algorithm that works in $O(n^{k-(t-1)})$ queries, and show that this is the best possible order. This algorithm relies heavily on a recursive approach, determining the results of queries that have not been carried out from those that have. We also outline an alternative algorithm that works in a similar amount of time, but works directly, determining the ordering of the elements using an adjacency based argument. In the case $s>1$ the recursive approach can often still be applied, but the calculations involved get more complicated and require more detailed case analysis. However the adjacency argument continues to work, giving us a general offline algorithm that works when the scale outputs an unordered set.

\section{Online Algorithms}
\label{OnlineSection}

In this section we consider online algorithms, where the user is given the result of each query as he requests it, and on that basis selects the next set of $k$ elements that he wants to query. As highlighted in the introduction, these scales cannot in general determine the full ordering of the element set, as the first and last segments of the ordering will never be returned by any query, and so no query can determine their order. We refer to these segments as $S$ and $L$ respectively, as mentioned in the introduction, and in each case shall highlight which elements they comprise. The remaining elements we denote $X'$, and note that there are at least $n - (k-1)$ of them, which we call $n'$.

We begin by considering a singleton-scale, which returns a singleton output. 

\subsection{Singleton Output Scales}

The classical version of this question is the binary scale which accepts as input two elements and returns the smaller. We consider more general $(k, t)$ scales, accepting $k$ elements and returning the $t^\text{th}$ smallest. To ease notation throughout this section we shall assume that $t \leqslant k/2$, i.e. it lies in the first half of the queried set. If $t > k/2$ then the following analysis still holds, inverting the roles of $S$ and $L$ and making occasional other similarly minor adjustments to the calculations. \\

We describe an algorithm that works in $O\big(n' \log n'\big)$ queries to determine the ordering of the element set. This algorithm works by first determining $S$ and $L$ (in Stages 1.1 and 1.2), and then using them to iteratively determine the ordering by repeatedly determining the smallest element among the remaining unsorted elements (in Stage 1.3). Stage 1.3 shall take the longest, it is this stage that takes $O\big(n' \log n'\big)$ time to run, while Stage 1.1 is linear in $n'$ and Stage 1.2 is independent of $n'$, taking a number of queries that is just a function of $k$. Hence the combination of the three works in $O\big(n' \log n'\big)$ queries as required.\\

\setcounter{apple}{1}

\begin{stage}
Determine the elements that comprise $S \cup L$.
\end{stage}

\begin{method}
Note that these elements are precisely those that can not be returned by any query. Hence they can be identified by eliminating all the others. The user repeatedly picks a $k$-set from those that he has not yet eliminated, and eliminates whatever is the output. Each time he does this it eliminates 1 more element. He continues doing so until he cannot find another $k$-set, which occurs when there are $k-1$ elements left. But note that $S \cup L$ is always contained within the remaining elements, and there are $k-1$ elements in $S \cup L$, so it is exactly the remaining elements at this point.\\
\end{method}

\begin{stage}
Partition $S \cup L$ into $S$ and $L$, and if possible identify which is which.\\
\end{stage}

\begin{method}
Pick an arbitrary set from the eliminated elements, which we denote $a_1, \ldots, a_{k-1}$ where the index respects the ordering of the elements. For each element in $S \cup L$ query it taken together with $\{a_1, \ldots, a_{k-1}\}$. If it was in $S$ this will return $a_{t-1}$ and if it was in $L$ it will return $a_t$. Although the user does not know which is which, he can partition $S \cup L$ into $S$ and $L$ according to which response he gets. He can further keep count of how often each comes up, as he will receive $a_{t-1}$ $|S|$ times, and $a_t$ $|L|$ times. If $|S| \neq |L|$ then this further identifies $S$ and $L$ -- the only time this won't work is when $|S| = |L|$. This occurs when $t = k/2$, which means that the underlying scale is symmetric. Hence by the end of Stage 1.2 in the case of an asymmetric instrument the user knows $S$ and $L$, and if the instrument is symmetric then he has identified the set $\{S, L\}$ but does not know which is which.\\
\end{method}

\begin{stage}
Use $S$ to determine the order of the remaining elements.\\
\end{stage}

\begin{method}
First, let us assume that our instrument is asymmetric, and hence the user knows $S$ before starting this Stage -- at the end we shall address what is done in the symmetric case. In this stage the user shall repeatedly use $S$ to determine the smallest element of a subset of $k' := k-(t-1)$ elements by querying them taken together with $S$. If he wishes to find the smallest element of a smaller set he takes it together with $S$ and supplements it with elements from $L$ until he has $k$ elements allowing him to query it. In both cases as $S$ takes up the first $t-1$ elements of the queried set, it will return the next largest which is the smallest of the subset he is trying to check.\\

The user now take the remaining $n'$ elements and partitions them into as many sets of size $k'$ as possible, with a remainder set of at most $k'$ elements, which we shall call level 1 sets. He then groups the level 1 sets into sets of size $k'$ (again as far as possible, with perhaps a deficient remainder set) to get level 2 sets, so a level 2 set consists of $k'$ sets each of $k'$ elements. He continues this process until all the $n'$ elements are in a single set - we denote the level where this occurs as $d$, where $d = \log_{k'} n'$This effectively creates a $d$-dimensional grid of sets. We illustrate this grouping for the first two layers in Figure \ref{ScalesGrouping}. \\

\begin{figure}[ht] \centering
\begin{tikzpicture}
\tikzstyle{vertex} = [draw, fill=black, minimum size=3pt, inner sep=0pt, shape=ellipse]
\foreach \centrex/\centrey in {-4/0, 0/0}
{
\draw[fill=black!5] (\centrex, \centrey) ellipse (1.75 and 2);
\node[font=\Large] at (\centrex, -2.5) {$L_2$};
\foreach \addx/\addy in {0/1.15, 0.866/-0.35, -0.866/-0.35}
	{
	\draw[fill=black!20] (\centrex + \addx,\centrey + \addy) ellipse (0.6 and 0.6);
	\node[font=\large] at (\centrex + \addx, \centrey + \addy -0.9) {$L_1$};
	\foreach \x/\y in {0/0.4, 0.346/-0.2, -0.346/-0.2}
		{
		\node[vertex] at (\centrex + \addx + \x,\centrey + \addy + \y) {};
		}	
	}
}
\draw[fill=black!5] (4, 0) ellipse (1.75 and 2);
\node[font=\Large] at (4, -2.5) {$L_2$};
\draw[fill=black!20] (4 + 0,0 + 1.15) ellipse (0.6 and 0.6);
\node[font=\large] at (4 + 0, 0 + 1.15 -0.9) {$L_1$};
\draw[fill=black!20] (4 + -0.866,0 + -0.35) ellipse (0.6 and 0.6);
\node[font=\large] at (4 + -0.866, 0 + -0.35 -0.9) {$L_1$};
\foreach \addx/\addy in {0/1.15}
	{
	\foreach \x/\y in {0/0.4, 0.346/-0.2, -0.346/-0.2}
		{
		\node[vertex] at (4 + \addx + \x,0 + \addy + \y) {};
		}	
	}
\node[vertex] at (4 + -0.866 + 0,0 + -0.35 + 0.4) {};
\end{tikzpicture}
\caption{First two layers of the grouping process}
\label{ScalesGrouping}
\end{figure}
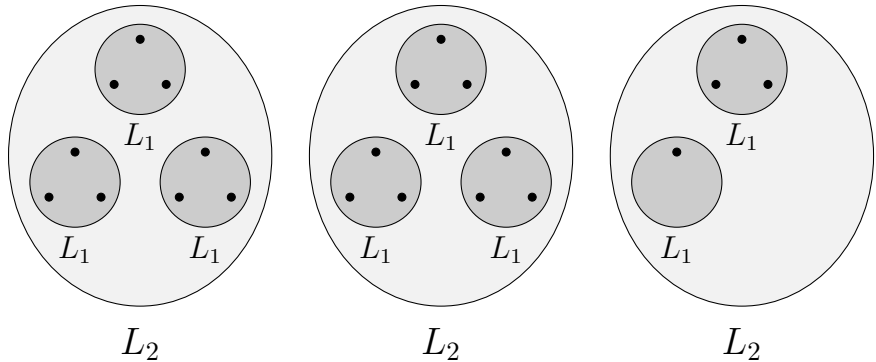

At the first step he queries all the level 1 sets, establishing which is the smallest element of each. He then `checks' each level 2 set in turn by querying the smallest element of each level 1 set inside it to find the smallest element in each level 2 set. Continuing in this manner he can establish the smallest element in every level $r$ set for all $1 \leq r \leq d$, and hence the smallest element in the level $d$ set. But the level $d$ set contains everything, so he  has found the smallest element in the remaining $n'$ elements. He now wants to remove this element and find the next smallest. Note that this only requires him to check/query the level $1, 2, \ldots, d$ sets that the previous smallest was in, as the others are unaffected. Repeating $n'$ times therefore determines the order of the remaining elements.\\

This completes the analysis of the asymmetric case. If the instrument is symmetric, then recall as discussed earlier that the user would only at best be looking to determine the ordering or its reflection, since he would not be able to distinguish these with any query. If his instrument is symmetric though the main difference is that at the start of Stage 1.3 he wouldn't know which set was $S$ and which was $L$. However he can arbitrarily assume either one is $S$, and carry out Stage 1.3 under that assumption. If he was correct he will get the correct ordering, if incorrect he will get the reflection, and given that he cannot distinguish these with a symmetric instrument anyway that therefore gives him the best possible information he could gather about the ordering.\\
\end{method}

It remains to show that this method only uses $O\big(n' \log(n')\big)$ queries. Stage 1.1 requires one query for each element in $X \setminus (S \cup L)$, so uses $n-(k-1)$ queries. Stage 1.2 requires one query for each element in $S \cup L$, so uses $(k-1)$ queries. Thus together Stages 1.1 and 1.2 use a total of $n$ queries.\\

Stage 1.3 takes longer, this is where the extra log factor comes in. The first run through of the levels in which the user finds the smallest value in each set takes this many queries

\begin{equation*}
\sum_{i=1}^d\left\lceil\frac{n'}{(k')^i}\right\rceil \leqslant d n'
\end{equation*}

Having completed these queries, and thus found the smallest element in the set $X \setminus (S \cup L)$, the user then needs to carry out $d$ additional queries for each remaining element. Hence he requires at most $d (n'-1)$ remaining queries -- in practice he may require slightly less as when he gets some levels with only 1 element he doesn't need to continue to query them.\\

Hence in total this algorithm required at most the following number of queries

\begin{equation*}
n + 2 d n' = n + 2 n' \log_{k'} (n') = O\big(n' \log n'\big).
\end{equation*}

\subsection{Multiple output instruments}

We now turn our attention to scales that return multiple elements. These accept as input $k$ elements from $X$ and return a set of size $s$ containing the $t_1^\text{st}$, $t_2^\text{nd} \ldots t_s^\text{th}$ elements. We refer to such a scale as a $(k, t_1, \ldots, t_s)$ scale.\\

In this case we shall outline an algorithm that works in similar stages to the singleton output case, although in each case more work shall be needed to achieve the same ends. We shall again make use of the initial and final segments, which with slight abuse of notation we again denote by $S$ and $L$. This time however $S$ consists of the first $t_1 - 1$ elements, and $L$ of the final $k - t_s$ elements. Together they therefore comprise a set of $k - 1 - (t_s - t_1)$ elements. It is possible if $n$ is small compared to $k$ that there can be other elements that the user cannot distinguish the order of. If, for example, we consider a $(7, 2, 6)$ instrument on 8 elements then none of $\{x_1, x_4, x_5, x_8\}$ will ever included in an output, so as well as the initial and final segments we have a middle segment that is indistinguishable. However in our case we think of $n$ as being arbitrarily large compared to $k$; indeed we consider the asymptotics as $n \rightarrow \infty$, and if $n > 2k$ then no such middle set of indistinguishable elements can exist.\\

\stepcounter{apple}
\setcounter{pear}{0}

\begin{stage}
Determine the elements that comprise $S \cup L$.\\
\end{stage}

\begin{method}
Again, as in the singleton case, $S \cup L$ comprise the elements that are never in the output of any query. So the user can begin by repeatedly querying uneliminated elements, and discarding the elements contained in the outputs. In the singleton case, this worked until there were $k-1$ elements left, and these comprise exactly $S \cup L$. In this case however the user can only eliminate elements until he has $k-s$ left, and in general $S \cup L$ have $k - 1 - (t_s - t_1)$ elements. These are the same if $t_s - t_1 = s - 1$, which occurs if the numbers $t_1$, $t_2 \ldots, t_s$ are consecutive. In that case Stage 2.1 terminates at this point.\\

If $t_1, \ldots, t_s$ are not consecutive then the user has a little more work to do. After carrying out the above he has $k - s$ candidates left for $S \cup L$, which contains $S \cup L$ and some extra elements which he wishes to eliminate. To do so we use an inductive approach. Specifically we shall show that if we have a set of $k-a$ candidates with $a < s$, $\{c_1, \ldots, c_{k-a}\}$, that contains at least 1 element not in $S \cup L$ then we can eliminate 1 more. This clearly suffices to eliminate all those not in $S \cup L$. Note that if we query $\{c_i\}$ along with a set of size $a$, as we get at least $a$ outputs they must either consist exactly of the $a$ additional elements, or include and thus eliminate an additional candidate. So to achieve the inductive aim, pick any set of $(2a - 1)$ already eliminated elements, which we denote $\{e_1, \ldots, e_{2a-1}\}$, and carry out all the $\binom{2a-1}{a}$ queries involving $a$ of the additional elements along with the candidates. Let $x$ be an element in $\{c_i\}$ not in $S \cup L$, and note that by the pigeonhole principle either at least $a$ of the $\{e_i\}$ are lower than $x$ in the ordering, or at least $a$ of them are greater -- without loss of generality we assume the former, and relabelling if necessary we assume that the set $\{e_1, \ldots, e_a\}$ is among them. Hence when we query $\{c_1, \ldots, c_{k-a}, e_1, \ldots, e_a\}$, all of $L$ and $x$ are greater than all of the additional elements, hence the $t_r^\text{th}$ element cannot be one of the additional elements and so must lie in the $\{c_i\}$, eliminating it. This process can continue until the remaining candidates are exactly $S \cup L$, after this point there does not exist any $x \in \{c_1, \ldots, c_{k-a}\} \setminus (S \cup L)$. Hence this process ends exactly with $S \cup L$ being identified. \\
\end{method}

\begin{stage}
Partition $S \cup L$ into $S$ and $L$, and if possible identify which is which.\\
\end{stage}

\begin{method}
We take exactly the same approach as in the singleton case. Pick any set of size $k-1$ taken from $X \setminus (S \cup L)$, which we denote $a_1, \ldots, a_{k-1}$. The user carries out the $|S \cup L|$ queries consisting of this reference set taken with one element from $S \cup L$. If the element from $S \cup L$ was in $S$ then this returns $\{a_{t_1-1}, \ldots, a_{t_s-1}\}$, while if it was in $L$ it returns $\{a_{t_1}, \ldots, a_{t_s}\}$. Although the user cannot immediately distinguish these they are clearly distinct, so he can partition $S \cup L$ into $S$ and $L$ according to the multiplicites of the responses.\\

In the singleton case the user could also determine which of these sets was $S$ and which was $L$ in the assymmetric case, since that implied they would be of different sizes. This is more complicated in the multiple-output situation, since you could have $t_1 + t_s = k + 1$, giving $|S| = |L|$, but still have an assymetric instrument as a result of some other elements in the output. In practice it doesn't matter which is which, and so for the moment we do not address the question of how to distinguish them in this algorithm. We will however address this at the end of this section for completeness.\\
\end{method}

\begin{stage}
Use $S$ to determine the order of the remaining elements.\\
\end{stage}

\begin{method}
If $S$ and $L$ are of different sizes, then the following works: Let $S'$ be the set of the first $t_s - 1$ elements of $X$, noting that this includes $S$. If the user can identify $S'$ then he can reduce our scale to a $(k', 1)$ scale by insisting on always including $S'$ in any query -- this fills up the first $t_s - 1$ slots of the scale, and means that it will always return some subset of $S'$ (easily ignored) along with the smallest element of the remainder. Defining $X'$ as $X \setminus S'$ he can then use this $(k', 1)$ scale to sort $X' \setminus L$ in $O\big(n \log n \big)$ steps as per Stage 1.3 of the singleton output algorithm. This sorts the majority of $X$ (assuming as always that $n$ is large compared to $k$. To sort the remaining elements in $S' \setminus S$ is then straightforward since the user will have identified the final $k$ elements of $X$, so can create an $(k'', k'')$ instrument by including $k - t_1$ elements of $X$. This can be used to sort the remaining elements.\\

It just remains therefore to say how to find $S'$. If $t_1 - 1$ divides $t_s - 1$ then $S'$ is easy to find by repeatedly removing the smallest elements from $X$. In Stages 2.1 and 2.2 we outlined how to identify $S$ i.e. the smallest $t_1 - 1$ elements in $X$. By removing these and repeating the process the user can identify the next $t_1 - 1$ elements repeatedly until he has found the first $t_s - 1$ elements. If $t_1 - 1$ does not divide $t_s - 1$ then the same process can be applied, except that when the user needs fewer than $t_1 - 1$ more elements at the end to top off $S'$, he leaves in some of the previous $t_1 - 1$ set that he removed so that he requires something of the correct size.\\

If $S$ and $L$ are of the same size then at the end of Stage 2.2 the user has partitioned $S \cup L$ into sets $A$ and $B$, which are $S$ and $L$ but he doesn't know which is which. We suggest the following: he makes an arbitrary assignment, claiming that $A$ is $S$. Discarding this and repeating Stages 2.1 and 2.2 on the remaining $X \setminus A$ elements will return two more sets, $A'$ and $B$. Note that this will be the same $B$ as before, so he can identify $A'$ and discard it again. Continuing in this manner he will eventually find a set that is either $S'$ or $L'$, as above. He can assume this is $S'$, and use it to form what he thinks is an $(k', 1)$ instrument. Using this he can sort the remaining elements as above, and come up with an ordering for $X$. This ordering will either be correct, or exactly the reverse of the correct ordering if his initial theory that $A$ was $S$ was wrong. He can check which of these is true if he had an asymetric instrument by carrying out any query of $k$ elements taken from $X \setminus (S \cup L)$, and seeing if the answer agrees with his opinion on what the ordering is. If it does not, he simply reverses the ordering. \\
\end{method}

It now remains to analyse how long this shall take. Stage 2.1 takes $\left\lceil\frac{n-(k-s)}{s}\right\rceil$ queries initially to get down to $k-s$ elements. To get from there to $S \cup L$ involves removing at most $k$ elements, and removing each takes at most $\binom{2k-1}{k}$ queries, so in total this takes at most $(2k)^{k+1}$ queries. Stage 2.2 then takes $|S \cup L| = k - 1 - (t_s - t_1)$ queries. Hence between them Stages 2.1 and 2.2 take in total a linear number of queries in $n$ with an additional number of queries that is solely a function of $k$. As we consider the situation where $n$ is large and $k$ is small and fixed, this is effectively a linear number of queries in $n$.\\

Stage 2.3 takes as most $k$ runs of Stages 2.1 and 2.2 to identify $S'$, which is therefore still linear in $n$. Having identified $S'$ it then takes $O\big(n \log n\big)$ steps to sort the set $X'$. The final sorting of the remanent, and the extra query in the case where $S$ and $L$ are the same size, clearly only take a number of steps that is a function of $k$, hence overall this algorithm also runs in time $O\big(n \log n\big)$ as required.\\

As promised, we now consider how to distinguish $S$ from $L$ in the case of an assymetric instrument in Stage 2.2. Let $p$ be the smallest index that makes the instrument non-symmetric, in the sense that only one of the $p^\text{th}$ and $(k+1-p)^\text{th}$ elements are in the output. Without loss of generality we shall assume it is the case that the $p^\text{th}$ is in the output and its reflection (inside the scale) is not, to ease notation. We define $S_1 = S$, $L_1 = L$ and $X_1 = X \setminus (S \cup L)$, similarly to before, and then recursively define $S_i$, $L_i$ and $X_i$ to be the initial, final and middle segments of $X_{i-1}$ respectively. Hence for any index $i$ $X$ is thus composed of $\bigcup_{j=1}^i S_j \cup \bigcup_{j=1}^i L_j \cup X_i$. Note that given that Stages 2.1 and 2.2 determined $\{S, L\}$ from $X$, they can be repeated to determine $\{S_i, L_i\}$ and $X_i$ from $X_{i-1}$. Hence the user can build up as many pairs of sets as he wants, all of which form the initial and final segments of the remainder of the full set, provided $n$ is sufficiently large.\\

We use this idea to show how to find $S$ and $L$. First the user determines $\{S_i, L_i\}$ for all $1 \leqslant i \leqslant p+k-2$. He can then identify $S_p$ by querying a set containing one element from each of the sets in the pairs $\{S_1, L_1\}$, $\{S_2, L_2\}$, $\ldots$, \{$S_{p-1}, L_{p-1}\}$, a single element from one of the sets in $\{S_p, L_p\}$ and then balancing elements from $X_p$ to fill out the instrument. If the element he picked from $\{S_p, L_p\}$ was from $S_p$ then it will be in the output, if it was from $L_p$ then it won't, enabling him to identify $S_p$ and $L_p$. He can then repeat this to $\{S_{p+1}, L_{p+1}\}$, $\{S_{p+2}, L_{p+1}\}$, $\ldots$, $\{S_{p+k}, L_{p+k}\}$, and thus identify exactly the sets $S_p, S_{p+1}, \ldots, S_{p+k-2}$. Now identifying $S$ is simple, since he just needs to take one element from one of $\{S, L\}$, and query it along with $k-1$ elements, taken one each from $S_p, S_{p+1}, \ldots, S_{p+k-2}$. The result of this query will determine if the element from $\{S, L\}$ was from $S$ or $L$ merely by looking at which of the other elements was returned, and hence he can identify which set is $S$ and which is $L$. As $p$ is at most $k/2$, if he wished to do this it would require at most $3k/2$ additional runs of Stages 2.1 and 2.2, which is still therefore linear in $n$ and so would not materially affect the running time of the algorithm for large $n$.

\section{Offline Algorithms}
\label{OfflineSection}

We now turn our attention to offline algorithms. In this situation the user must specify the full list of queries that he wishes to carry out in advance, and then receives all the answers simultaneously afterwards. As we saw in Section \ref{OnlineSection}, if the user knows the results of all the possible queries then he can determine essentially the full ordering (excluding $S$ and $L$ as before), since he can follow any of the online algorithms, looking up the results of any query from his bank of known query results. We concern ourselves with trying to minimise the number of queries that he must request in order to determine this ordering.\\

We begin by considering a singleton-scale, which returns a singleton output. 

\subsection{Singleton Output Scales}

In this case, as in the online section on singleton output scales, the user is given a $(k, t)$ scale that accepts a $k$-set as input and returns the $t^\text{th}$ smallest element. Again for simplicity we assume that $t \leqslant k/2$, if not then similar analysis follows, occasionally replacing the word `smallest' with `largest' and, where we have comments about filling the scale from the lower elements, instead filling it from the higher. \\

We first note a simple lower bound, namely that if there is any $t$-set that is not included in a query then there are some orderings that the algorithm would not be able to distinguish. This is because if the missed $t$-set comprised the first $t$ elements of the ordering then no element of it would ever be included in the output of any set. Thus the user would not be able to tell which $t-1$ subset of it formed $S$, and which element was the lowest element of $X \setminus S$. The same applies if there was a $k-(t-1)$-tuple that was not included in any query, since it could form the largest $k-(t-1)$ elements, and then the user would not be able to tell which was the largest element of $X \setminus L$. As $t \leqslant k/2$ by assumption, this second set is larger, and so there are more possible $k-(t-1)$ tuples than there are $t$-tuples. Hence this forms the restriction that we appeal to. Noting that there are $\binom{k}{k-(t-1)}$ $(k-(t-1))$-sets in any query, and as there are $\binom{n}{k-(t-1)}$ $k-(t-1)$-sets in total, this means that all algorithms must contain at least the following number of queries

\begin{equation*}
\frac{\binom{n}{k-(t-1)}}{\binom{k}{k-(t-1)}} = O(n^{k-(t-1)})
\end{equation*}

We shall show that, in fact, there are algorithms that use this order of number of queries. We offer two for consideration, one that relies on recursively deducing the results of all possible queries, and thus the ordering, the second of which is direct and relies on determining the adjacencies of the ordering. We begin with the recursive algorithm.

\subsubsection{Recursive Algorithm}

Our algorithm works by fixing some set of $r$-elements, $Y := \{y_1, \ldots, y_r\}$, and requesting all the queries that involve $Y$ and a $(k-r)$ set from $X \setminus Y$. We shall show that, provided $r$ is not too large, then from this the user can deduce the result of an arbitrary query containing any $(r-1)$-subset of $Y$. If this holds then, by induction, the user can deduce the result of any query, and hence the full ordering. We prove this inductive claim by induction on $r$, beginning with the case $r=1$.

\begin{Theorem}
\label{ScalesOneFixed}
If $y$ is a fixed element and the results of all queries including $y$ are known, then the result of a query on any set $\{a_1, \ldots, a_k\}$ can be deduced.
\end{Theorem}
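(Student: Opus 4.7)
Proof proposal. The plan is to show that the $\binom{n-1}{k-1}$ known query results determine (i) the global rank of $y$, (ii) which side of $y$ every other element lies on, and (iii) via a small number of already-known queries, the $t$-th smallest of the target set $\{a_1,\ldots,a_k\}$.

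For (i) and (ii) I would count. The number of $y$-queries whose output equals $y$ itself is $\binom{j}{t-1}\binom{n-1-j}{k-t}$, where $j$ denotes the number of elements of $X$ below $y$; since $n,k,t$ are known, this determines $j$. Then for each $z\neq y$, restricting the count to queries that also contain $z$ gives $\binom{j-1}{t-2}\binom{n-1-j}{k-t}$ if $z<y$ and $\binom{j}{t-1}\binom{n-2-j}{k-t-1}$ if $z>y$; for generic $j$ these two expressions are distinct, so comparison with the observed count decides the side of $z$. Applied to each $a_i$, this partitions $\{a_1,\ldots,a_k\}$ into $A^-=\{a_i<y\}$ and $A^+=\{a_i>y\}$, with $p:=|A^-|$.

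For (iii), examine the known query $Q_j:=\{y\}\cup(\{a_1,\ldots,a_k\}\setminus\{a_j\})$. Its sorted profile is determined by $p$ and by whether $a_j$ lies in $A^-$ or $A^+$: $y$ appears at position $p+1$ of $Q_j$ when $a_j\in A^+$ and at position $p$ when $a_j\in A^-$. A short case analysis then shows that, provided both $A^-$ and $A^+$ are non-empty, one can choose $j$ on the appropriate side (from $A^+$ if $t\leqslant p$, from $A^-$ if $t>p$) so that the $t$-th smallest of $Q_j$ coincides with the $t$-th smallest of $\{a_1,\ldots,a_k\}$, and the desired answer is just the looked-up value.

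The two extremal configurations $p=0$ and $p=k$ need an extra step, since $y$ then sits at one end of every $Q_j$. Here I would invoke all $k$ queries $Q_1,\ldots,Q_k$ at once: as $j$ varies, the output takes exactly two consecutive ranks of the sorted $\{a_1,\ldots,a_k\}$, with multiplicities $t$ and $k-t$, and the asymmetry hypothesis $t<k/2$ breaks the symmetry and identifies the correct one by a majority/minority count. The fiddliest piece of the argument is step (ii): ruling out the rare coincidences in which the two binomial expressions collide, or in which one of them degenerates because $y$ is too close to an extreme of the ordering. I expect these to be handled by a complementary count over queries in which $z$ itself (rather than $y$) is the returned element; this is the corner I anticipate needing the most care.
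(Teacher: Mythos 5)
Your step (iii) is sound: once you know, for each $a_i$, which side of $y$ it lies on (and hence $p$), deleting an $a_j$ from the appropriate side leaves the $t^\text{th}$ smallest of the modified set equal to $a_t$ of the original set, so the answer is a straight look-up; and in the extremal configurations the multiset of answers to all $k$ modified queries identifies $a_t$ (though for $p=0$ the multiplicities are $t-1$ and $k-t+1$, not $t$ and $k-t$, and when the paper's standing assumption $t\leqslant k/2$ holds with equality your majority/minority rule ties in the $p=k$ case). The genuine gap is in steps (i) and (ii), on which (iii) depends. The count of $y$-queries returning $y$ does not determine $j$: for a $(4,2)$ scale with $n=6$, both $j=1$ and $j=2$ give $\binom{j}{1}\binom{5-j}{2}=6$. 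Likewise the two expressions in (ii) collide for perfectly legitimate parameters (for a $(4,2)$ scale they agree whenever $j=(n-1)/3$), so these are not rare degeneracies; and your proposed repair --- counting the queries whose output is $z$ --- brings in the unknown rank of $z$ itself and is not carried out. As written, the side-determination, and with it the whole deduction, is not established.

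For comparison, the paper's proof shows that this global localization of $y$ is unnecessary. It uses exactly the same $k$ queries $\{y\}\cup\bigl(\{a_1,\ldots,a_k\}\setminus\{a_i\}\bigr)$, but reads $a_t$ off from the multiset of their answers alone: the answer multiplicities are $(t-1,\,k-t+1)$ when $y<a_t$ and $(t,\,k-t)$ when $y>a_t$, the presence or absence of $y$ among the answers separates the middle cases, and $a_t$ is then the answer carrying multiplicity $t-1$ or $k-t$ respectively. So neither $y$'s rank nor the sides of the $a_i$ relative to $y$ ever need to be known. If you want to rescue your route, the cleanest fix is to extract the side information from these same $k$ queries (which essentially reproduces the paper's case analysis) rather than from global binomial counts over all $\binom{n-1}{k-1}$ known results.
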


\begin{proof}
Note that the claim is trivial if $y \in \{a_1, \ldots, a_k\}$, as this would mean that this exact query had taken place. So let us assume that $y \notin \{a_1, \ldots, a_k\}$. We wish to deduce the value of $a_t$ from queries of the form $\{y, a_1, \ldots, a_k\} \setminus \{a_i\}$ for $1 \leq i \leq k$. We shall split into 4 cases for $y$ and 3 cases for $a_i$ in relation to $a_t$, and count how often we get various responses. These are summarised in the following grid:

\begin{equation*}
\begin{array}{c|c||c|c|c|c}
 & & \multicolumn{4} {|c} {\text{Response}}\\
 & \text{Multiplicity} & y < a_{t-1} & y \in (a_{t-1}, a_t) & y \in (a_t, a_{t+1}) & y > a_{t+1}\\
\hline
\hline
a_i < a_t & t-1 & a_t & a_t & y & a_{t+1}\\
\hline
a_i = a_t & 1 & a_{t-1} & y & y & a_{t+1}\\
\hline
a_i > a_t & k-t & a_{t-1} & y & a_t & a_t\\
\end{array}
\end{equation*}

Now, when performing these queries we get all the results from some column. So if, for example $y < a_{t-1}$, then we get $a_t$ $(t-1)$ times and $a_{t-1}$ $(1 + (k-t))$ times. We can establish which column we are in, and thus how $y$ compares with $a_t$, by looking at the multiplicities of the answers - if we have two different answers with multiplicities $(t-1)$ and $(k-t+1)$ then we are in one of the first two columns, and if we get multiplicities $t$ and $(k-t)$ then we are in one of the last two columns. Further if we get the answer $y$ for some of our queries we are in the middle two columns, if not we are in the outside columns. Thus we can determine which column we are in. Now by taking the result with the appropriate multiplicity ($(t-1)$ in the first two columns, and $(k-t)$ in the latter two) we can tell the value of $a_t$, as required. We can summarise these in the following associated table:

\begin{equation*}
\begin{array}{c|c|c}
\text{Case} & \text{Multiplicities} & \text{Mult. of }a_t\\
\hline
\hline
y < a_t & (t-1, k-t+1) & t-1\\
\hline
y > a_t & (t, k-t) & k-t
\end{array}
\end{equation*}

\end{proof}

The general case is somewhat tricky to see, as the case analysis gets very detailed. Instead, we present the case for $r=2$, which covers most of the concepts that we appeal to, and then explain how the argument changes for a general $r$. The first key point is that just taking the queries involving $x$ and $y$ would not by itself be enough, as the user will also need to know which is larger out of $x$ and $y$. But the following lemma gives a simple way to do that 

\begin{Lemma}
\label{findpair}
Assume that we have a asymmetric scale. Let $z_1, \ldots, z_{k+1}$ be $(k+1)$ fixed elements of our set. By querying all the $\binom{k+1}{k}$ subsets of them we can find two of them $x$ and $y$ such that neither $x$ nor $y$ are in $S \cup L$ and we know $x < y$.
\end{Lemma}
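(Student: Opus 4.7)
The plan is to execute the $k+1$ queries on the $k$-subsets of $\{z_1,\ldots,z_{k+1}\}$ (one for each omitted element) and then identify the required pair purely from the multiplicities of the returned values, with no prior knowledge of the true ordering of the $z_j$ needed.

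For the analysis I would relabel so that $z_1 < z_2 < \cdots < z_{k+1}$ in the (unknown) true ordering. If we omit $z_i$, the remaining $k$-set has $t$-th smallest equal to $z_{t+1}$ when $i \leqslant t$ (one of the bottom $t$ entries has been deleted, so the element formerly in position $t+1$ now sits in position $t$) and equal to $z_t$ when $i > t$ (the bottom $t$ entries are then untouched). Hence exactly two distinct values appear among the $k+1$ responses: $z_t$ with multiplicity $k+1-t$ and $z_{t+1}$ with multiplicity $t$.

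I would then appeal to the standing convention $t \leqslant k/2$ together with the asymmetry hypothesis on the scale to conclude that $t < k+1-t$, so the two multiplicities can be told apart. The user can therefore identify $z_t$ as the value returned $k+1-t$ times and $z_{t+1}$ as the value returned $t$ times; setting $x := z_t$ and $y := z_{t+1}$ gives a known inequality $x < y$, and since both $x$ and $y$ arise as outputs of genuine queries on $k$-subsets of $X$, neither can belong to $S \cup L$.

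I do not foresee a serious obstacle. Every step is a one-line case analysis of the action of a $(k,t)$-scale on a totally ordered $(k+1)$-set, and the only real content is the multiplicity-distinctness step, which reduces to the arithmetic inequality above and is exactly where the asymmetry of the scale is invoked.
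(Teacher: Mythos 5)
Your proof is correct and follows essentially the same route as the paper's: relabel the $z_j$ by the true order, observe that the $k+1$ leave-one-out queries return $z_t$ with multiplicity $k+1-t$ and $z_{t+1}$ with multiplicity $t$, and use the fact that these multiplicities differ (the paper invokes asymmetry via $t \neq \frac{k+1}{2}$, you invoke the convention $t \leqslant k/2$, and either suffices) to identify which is which, noting that both appear as outputs and so avoid $S \cup L$. No changes needed.
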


\begin{proof}
Relabelling these reference elements according to the ordering, we note that any query of a subset of them will either return $z_t$ or $z_{t+1}$. As these are possible responses to queries, neither can be a member of $S$ or $L$ so we take these as our $x$ and $y$. It remains to show that the user can identify which is the smaller, but it is clear that the user will receive the answer $z_t$ $(k+1-t)$ times, and $z_{t+1}$ $t$ times, enabling him to distinguish them if $t \neq \frac{k+1}{2}$. But this must hold, otherwise the scale would be symmetric, which contradicts our assumption.
\end{proof}

We now need the next requirement, that given $x$ and $y$, two fixed elements of the set such that the user knows that $x < y$, and all the queries involving this pair, the user can determine the results of any possible query, and hence as much of the ordering as could ever be possible.

\begin{Theorem}
\label{ScalesTwoFixed}
If $x$ and $y$ are two fixed elements such that $x < y$ and the results of all queries including $x$ and $y$ are known then the result of a query on any set $\{a_1, \ldots, a_k\}$ can be deduced.
\end{Theorem}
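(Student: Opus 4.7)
My plan is to mirror the approach of Theorem \ref{ScalesOneFixed}: for each target query $\{a_1, \ldots, a_k\}$, identify the relevant known queries containing both $x$ and $y$, catalogue what the $t$-th smallest of each such query could be, and read off $a_t$ from the observed multiplicities of the responses. I would split on how many of $x, y$ lie in $\{a_1, \ldots, a_k\}$. If both lie in it, the query itself is already known. If exactly one, say $x = a_l$, lies in it, then the $k - 1$ known queries $\{y, a_1, \ldots, a_k\} \setminus \{a_i\}$ for $i \neq l$ are precisely the queries used in the $r = 1$ proof with a single one missing. Dropping one query only shifts a single multiplicity by $1$, so the four columns of the $r = 1$ table remain distinguishable in most configurations, and the extra knowledge $x = a_l$ together with $x < y$ closes any remaining ambiguous case.

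The main case is when neither $x$ nor $y$ lies in $\{a_1, \ldots, a_k\}$. Here the $\binom{k}{2}$ known queries are $\{x, y, a_1, \ldots, a_k\} \setminus \{a_i, a_j\}$. I would build a two-dimensional table generalising the one in Theorem \ref{ScalesOneFixed}: rows indexed by the joint type of $(a_i, a_j)$ relative to $a_t$ (both less than $a_t$, one equal to $a_t$ and one less, both greater, etc.\ -- roughly six row types with known multiplicities) and columns indexed by the joint position of $(x, y)$ with respect to $a_{t-1}, a_t, a_{t+1}$. Since $x < y$, there are about ten such columns, running from both $x, y < a_{t-1}$ through $x < a_{t-1} < y < a_t$ up to $a_{t+1} < x < y$. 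For each cell I would compute the $t$-th smallest of the reduced query, with possible responses drawn from $\{a_{t-2}, a_{t-1}, a_t, a_{t+1}, a_{t+2}, x, y\}$. The key verification is that for any two columns either the multiset of responses differs, or the multiplicity vector differs, so that the observed data pin down the column; once the column is known, $a_t$ is recoverable as the response occurring with a specified multiplicity.

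The main obstacle is simply the size of this extended case analysis: instead of the $4 \times 3$ grid of the $r = 1$ proof we have something like $10 \times 6$, and some cells now feature responses such as $a_{t-2}$ or $a_{t+2}$ which never occurred before. Verifying pairwise distinguishability of columns therefore requires careful bookkeeping, and boundary situations (for instance when $a_i$ or $a_j$ coincides with $a_t$, or when $x$ and $y$ fall in the same interval determined by $\{a_{t-1}, a_t, a_{t+1}\}$) must be enumerated explicitly. No conceptually new idea is needed beyond those already present in the one-fixed-element case; the content of the argument is in confirming that the multiplicity profile always remains rich enough to isolate the correct column, after which $a_t$ can be extracted exactly as in Theorem \ref{ScalesOneFixed}.
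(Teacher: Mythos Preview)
Your plan is workable but misses the paper's main simplification. The paper does not handle the case where neither $x$ nor $y$ lies in $\{a_1,\ldots,a_k\}$ at all: instead it invokes Theorem~\ref{ScalesOneFixed} as a reduction. Since Theorem~\ref{ScalesOneFixed} says that knowing all queries containing a single fixed element $x$ suffices to deduce every query, it is enough to show that from the queries containing both $x$ and $y$ one can recover every query containing $x$. That is precisely your ``exactly one'' case, and once it is done the proof is complete. Your large $10\times 6$ table for the ``neither'' case is therefore unnecessary.

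For the remaining case the paper proceeds essentially as you sketch: write the target query (which already contains $x$) as $\{b_1,\ldots,b_k\}$ with $x=b_i$, and for each $j\ne i$ replace $b_j$ by $y$ to obtain $k-1$ known queries. The resulting table has the same four columns as in Theorem~\ref{ScalesOneFixed}, but the row multiplicities now depend on whether $x<b_t$, $x=b_t$, or $x>b_t$. The assumption $x<y$ then kills the two rows with $y<b_t\le x$, and among the surviving rows the multiplicity pair together with the correct choice of which multiplicity to read off always identifies $b_t$. So the extra information $x<y$ is used exactly once, to prune impossible rows, rather than to disambiguate a shifted-by-one count as you suggest. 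Your direct attack on the ``neither'' case would presumably also succeed, but at the cost of a substantially larger case analysis that the inductive structure of the paper is designed to avoid.
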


\begin{proof}
Note that by Theorem \ref{ScalesOneFixed} it suffices to show that we can find the result of any query involving $x$ and an arbitrary set of other elements, $\{a_1, \ldots, a_{k-1}\}$. We shall proceed on this basis. Relabelling them we can refer to such a set as $\{b_1, \ldots, b_k\}$, noting that $x$ is now one of the $\{b\}$, say $b_i$. We wish to establish how to find which is $b_t$ from a set of queries in which we replace each of the $b$ apart from $b_i$ by $y$. The results for these queries -- depending on whether the $b_j$ that $y$ replaces is smaller than, equal to or larger than $b_t$ -- are summarised in the following table - note that now the multiplicities vary according to the position of $x = b_i$:

\begin{equation*}
\begin{array}{c||c|c|c||c|c|c|cc}
& \multicolumn{3} {|c||} {\text{Multiplicities}} & \multicolumn{4} {|c} {\text{Responses}} \\
& x < b_t & x = b_t & x > b_t & y < b_{t-1} & y \in (b_{t-1}, b_t) & y \in (b_t, b_{t+1}) & y > b_{t+1} \\
\hline
\hline
b_j < b_t & t-2 & t-1 & t-1 & b_t & b_t & y & b_{t+1}\\
\hline
b_j = b_t & 1 & 0 & 1 & b_{t-1} & y & y & b_{t+1}\\
\hline
b_j > b_t & k-t & k-t & k-t-1 & b_{t-1} & y & b_t & b_t\\
\end{array}
\end{equation*}

We can again combine multiplicities according to $y < b_t$ and $y > b_t$. At first this looks like it won't let us differentiate options, as we get some situations with the same multiplicities:

\begin{equation*}
\begin{array}{c|c||c|c}
\multicolumn{2} {c||} {\text{Case}} & \text{Multiplicities} & \text{Mult. of }a_t\\
\hline
\hline
\multirow{3}{*}{$y < b_t$} & x < b_t & (t-2, k-t+1) & t-2\\
\cline{2-4}
 & x = b_t & (t-1, k-t) & t-1\\
\cline{2-4}
 & x > b_t & (t-1, k-t) & t-1\\
\hline
\hline
\multirow{3}{*}{$y > b_t$} & x < b_t & (t-1, k-t) & k-t\\
\cline{2-4}
 & x = b_t & (t-1, k-t) & k-t\\
\cline{2-4}
 & x > b_t & (t, k-t-1) & k-t-1\\
\end{array}
\end{equation*}

However we can see that this doesn't matter. Firstly the $2^\text{nd}$ and $3^\text{rd}$ rows of the above table are impossible, as our initial assumption was that $x < y$, so we can remove those. Secondly, we note that of the remaining 4 situations although 2 have the same multiplicities, in either of those two cases we just take the solution with multiplicity $k-t$ and conclude that this is $b_t$. Hence we can identify $b_t$ for any arbitrary set of elements $\{b_1, \ldots, b_k\}$ containing $x$, and hence by Theorem \ref{ScalesOneFixed} we can determine the order of the full set.
\end{proof}

We now give our method for the case $r=2$. The user first picks a set of size $k+1$, and requests all the queries involving $k$ of those -- note that there are $(k+1)$ of these. By Lemma \ref{findpair} this will find him a pair $x$ and $y$ from this set such that he knows $x < y$. He also considers all possible $2$-tuples from this $k+1$ set, and for each pair requests all queries involving that pair. This means that, in particular, even though he is carries out these queries offline he will know the results of all possible queries involving $x$ and $y$. In total this requires an additional $(k+1) + \binom{k+1}{2} \binom{n-2}{k-2}$ queries, which is of the same order as $\binom{n-2}{k-2}$, and hence $O(n^{k-2})$. But by Theorem \ref{ScalesTwoFixed} from these he can deduce the result of any query involving just one of $x$ and $y$, and hence by Theorem \ref{ScalesOneFixed} he can determine as much of the ordering as he could have hoped.\\

As advertised, we shall not give the full explicit argument for general $r$, as the analysis is tedious and not much more enlightening than the $r=2$ case. We shall instead explain how to modify the $r=2$ case. The first part is simple enough - carrying out all the possible probes on a $k+1$ set guaranteed us a pair of elements $x, y$ such that we knew their internal ordering. In general, carrying out all the probes on some $(k+r-1)$ set guarantees us a set of $r$ elements which we can completely order from these probes -- this is simply seen by just taking those as our whole universe and using any argument such as that outlined in the online cases. \\

The notation for the other part gets more involved. We require the following statement by induction. Let $\{x_1, \ldots, x_r\}$ be our reference set which we know the complete ordering of. We want to be able to say that we can deduce the value of some query involving the first $(r-1)$ of the reference set and some set of elements $\{a_1, \ldots, a_{k-r+1}\}$ by considering all the queries containing the full $r$ elements of the reference set and some $(k-r)$ subset of the $a_i$'s. Let us relabel the set $\{x_1, \ldots, x_{r-1}, a_1, \ldots, a_{k-r+1}\}$ as $\{b_1, \ldots, b_k\}$, so that some of the $b$ are taken from $x$-elements and some from $a$-elements. We shall then consider all the queries in which we replace one of the $a$-elements by $x_r$. Note that when counting multiplicities we must consider where $b_t$ lies relative to our reference set -- i.e. how many of them are below it, and whether or not one of them is $b_t$. This gives rise to the following table of multiplicities, we have omitted the left-hand four columns as they are again the same as the above.

\begin{equation*}
\begin{array}{c||c|c|c|c|c|c|c|c|c|}
& \multicolumn{4} {|c|} {\text{\# of reference set smaller than } b_t} &\\
& (r-1)< b_t & (r-1) \leq b_t & (r-2) < b_t & (r-2) \leq b_t & \cdots\\
\hline
\hline
b_j < b_t & t-r & t-r+1 & t-r+1 & t-r+2 & \cdots\\
\hline
b_j = b_t & 1 & 0 & 1 & 0 & \cdots\\
\hline
b_j > b_t & k-t & k-t & k-t-1 & k-t-1 & \cdots
\end{array}
\end{equation*}

\begin{equation*}
\begin{array}{c||c|c|c|c|c|c|c|c|c||}
& & \multicolumn{4} {|c|} {\text{\# of reference set smaller than } b_t}\\
 & \cdots & 2 \leq b_t & 1 < b_t & 1 = b_t & 0 < b_t\\
\hline
\hline
b_j < b_t & \cdots & t-2 & t-2 & t-1 & t-1\\
\hline
b_j = b_t & \cdots & 0 & 1 & 0 & 1\\
\hline
b_j > b_t & \cdots & k-t-r+3 & k-t-r+2 & k-t-r+2 & k-t-r+1\\
\end{array}
\end{equation*}

This then gives rise to the following table of multiplicities, where the lefthand column again corresponds to the number of reference elements below or equal to $b_t$.

\begin{equation*}
\begin{array}{c|c||c|c}
\multicolumn{2} {c||} {\text{Case}} & \text{Multiplicities} & \text{Mult. of }a_t\\
\hline
\hline
\multirow{7}{*}{$x_r < b_t$} & (r-1) < b_t & (t-r, k-t+1) & t-r\\
\cline{2-4}
 & (r-1) \leq b_t & (t-r+1, k-t) & t-r+1\\
\cline{2-4}
 & (r-2) < b_t & (t-r+1, k-t) & t-r+1\\
\cline{2-4}
 & \vdots & \vdots & \vdots\\
\cline{2-4}
 & 1 < b_t & (t-2, k-t-r+3) & t-1\\
\cline{2-4}
 & 1 = b_t & (t-1, k-t-r+2) & t-1\\
\cline{2-4}
 & 0 < b_t & (t-1, k-t-r+2) & t-1\\
\hline
\hline
\multirow{7}{*}{$x_r > b_t$} & (r-1) < b_t  & (t-r+1, k-t) & k-t\\
\cline{2-4}
 & (r-1) \leq b_t & (t-r+1, k-t) & k-t\\
\cline{2-4}
 & (r-2) < b_t & (t-r+2, k-t-1) & k-t-1\\
\cline{2-4}
 & \vdots & \vdots & \vdots\\
\cline{2-4}
 & 1 < b_t & (t-1, k-t-r+2) & k-t-r+2\\
\cline{2-4}
 & 1 = b_t & (t-1, k-t-r+2) & k-t-r+2\\
\cline{2-4}
 & 0 < b_t & (t, k-t-r+1) & k-t-r+1\\\end{array}
\end{equation*}

Again we can eliminate a large number of these situations. As we took $x_r$ to be the maximum of the fixed reference elements, in the first half of this table all but the top row disappear, since if $x_r$ is the largest and $x_r < b_t$ then all the other $(r-1)$ of them must also be less than $b_t$. In the second half each multiplicity is repeated twice, but we note that this doesn't affect our analysis as this is coming from separately counting the case where $b_t$ is one of our reference elements and where it isn't. This however doesn't matter, as all we are interested in is the value of $b_t$, and either way we take the answer with the larger multiplicity (i.e. the second, as $t < \frac{k}{2}$), and so recover $b_t$.\\

This works all the way down to $r = t - 1$. However when $r=t$ this no longer works, as you could be unlucky and pick as your fixed elements $S$ together with the smallest element left in the set. Then every probe would just give $x_r$ as it will be the $t^\text{th}$ element of any query. This is equivalent to noting in the above analysis that if all the first $(r-1)$ are less than $b_t$ then if $x_r < b_t$ you'll end up trying to pick the element with multiplicity 0, which doesn't exist. Hence in this case you won't be able to identify $b_t$. \\

This matches the lower bound that we expect, so we conclude that this gives an construction requiring $\binom{k+t-2}{k} + \binom{k+t-2}{t-1} \cdot \binom{n-t+1}{k-t+1}$ steps to sort the set, $\binom{k+t-2}{k}$ to find a fixed reference set of size $(t-1)$ that we can fully order and then $\binom{n-t+1}{k-t+1}$ further steps to carry out all possible queries with each possible set of $(t-1)$ fixed elements. This is $O(n^{k-t+1})$ as required.\\

\subsubsection{Adjacency algorithm}

The key concept behind the second algorithm that we present is that of knowing which elements are next to which in the ordering. We begin with the following observation, which states that this would be sufficient to determine the full ordering of the element set.

\begin{Observation}
\label{Adjgivesorder}
Let $X:= \{x_1, \ldots, x_n\}$ be a set of ordered elements, with the ordering being fixed but unknown to a user. Assume that the user knows which elements are adjacent to which others, i.e. he is given a map $\phi: X \rightarrow X^{(2)}$\footnote{Where we use the standard notation $X^{(2)}$ to be the set of all sets containing 2 elements from $X$.} such that 

\begin{equation*}
\phi(x_i) = \left\{\begin{array}{lcl}
\{x_{i-1}, x_{i+1}\} & \quad & i \notin\{1, n\}\\
\{x_2\} & & i = 1\\
\{x_{n-1}\} & & i=n
\end{array}\right.
\end{equation*}

Then the user can deduce the ordering of the element set, up to reflection.
\end{Observation}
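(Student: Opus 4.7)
My plan is to recover the ordering by constructing a walk along the adjacency structure induced by $\phi$, starting from one of the endpoints. The key observation is that $\phi$ identifies the endpoints $x_1$ and $x_n$ unambiguously, since those are the only two elements whose image under $\phi$ is a singleton rather than a pair. So I would begin by scanning $X$ for an element $y$ with $|\phi(y)| = 1$; such an element must exist and in fact there must be exactly two of them.

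Having found such a $y$, I relabel it $y_1$ and set $y_2$ to be the unique element of $\phi(y_1)$. The user does not know whether $y_1$ corresponds to $x_1$ or to $x_n$, but this ambiguity is precisely the reflection referenced in the statement. From here I would iteratively define $y_{i+1}$ to be the unique element of $\phi(y_i) \setminus \{y_{i-1}\}$ for $i \geq 2$. The fact that this set is a singleton whenever $y_i$ is an interior point follows directly from the definition of $\phi$: the image $\phi(y_i) = \{y_{i-1}, y_{i+1}\}$ in the unknown labelling has exactly two elements, one of which is the previously visited $y_{i-1}$.

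The walk terminates exactly when we reach an element $y_m$ with $|\phi(y_m)| = 1$, which must be the other endpoint; this is the signal to stop. I would then argue by a short induction on $i$ that the sequence $y_1, y_2, \ldots, y_m$ is either $x_1, x_2, \ldots, x_n$ or its reverse $x_n, x_{n-1}, \ldots, x_1$, depending on which endpoint we happened to start at. In particular $m = n$, so the procedure visits every element, producing the full ordering up to reflection.

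The only subtle point to verify is that the walk cannot terminate prematurely or revisit a vertex: this is immediate from the fact that $\phi$ encodes the adjacencies of a path graph on $n$ vertices, whose only Eulerian/Hamiltonian traversals from an endpoint are the two linear scans. There is really no main obstacle here; the proof is a short unwinding of the definition of $\phi$, and the interesting content of the observation is simply the remark that adjacency information suffices to recover the ordering, reducing the sorting task in the offline setting to the task of determining adjacencies.
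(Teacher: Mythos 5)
Your proposal is correct and follows essentially the same argument as the paper: identify the two endpoints as the elements with singleton $\phi$-image, pick one arbitrarily, and walk along the path by iteratively taking the unique element of $\phi(y_i)\setminus\{y_{i-1}\}$, yielding the ordering or its reflection. No substantive differences to note.
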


\begin{proof}
Note that only 2 elements only have 1 neighbour, namely $x_1$ and $x_n$. Hence the user can identify this pair easily. He picks one, and assumes it is $x_1$. $x_2$ is then immediately given as the sole neighbour of $x_1$. He then proceeds iteratively -- assume he has identified $x_1, \ldots, x_r$ up to some number $1 \leqslant r < n$, $x_{r+1}$ is then the element in $\phi(x_r)$ which is not $x_{r-1}$, extending the ordering. He can repeat this until he finally finds $x_n$, and ends up either with the correct ordering or, if his initial choice of $x_1$ was incorrect, the reflection of it.
\end{proof}

Hence it suffices to find a full list of the adjacencies to determine the ordering up to reflection. Note that, having done so if the instrument is asymmetric then any single query's result will determine which of the two possible orderings the user has, and if the instrument was symmetric then this would be the best he could have hoped for anyway. As ever, a full list will not be possible, but he can try to find all the adjacencies within $X \setminus (S \cup L)$. To do this the user will eliminate possible adjacencies for each element until only the actual adjacencies remain -- and then appeal to the above observation to determine the ordering.\\

We suggest the following approach. Consider two elements from $X \setminus (S \cup L)$, $x$ and $y$ where $x$ and $y$ are adjacent. If any query returns the response $x$ and then the same query is attempted with $x$ replaced by $y$, then the second query must return the element $y$. Alternatively consider the situation where we have three elements, $a$, $b$ and $c$, all taken from $X \setminus (S \cup L)$, with $a < b < c$, and as of yet the user does not know anything about their adjacencies. Consider a query of the form $\{x_1, \ldots, x_{k-2}\} \cup \{a, b\}$ which returns the output $a$. This means that a query of $\{x_1, \ldots, x_{k-2}\} \cup \{c, b\}$, with $a$ replaced by $c$, cannot return a response of $c$ since it would pick up $b$ first. Hence if he carries out these two queries he will know that $a$ cannot be adjacent to $c$ - if it was by the first comment $c$ would have had to be the response of the second query. However the existence of $b$ between $a$ and $c$ ensures that the second query will not return $c$.\\

This motivates our approach -- the user seeks a set of queries such that, for any triple $(a, b, c)$ such that all three lie in $X \setminus (S \cup L)$ with $a < b < c$ he can find some query containing $a$ and $b$ that returns $a$, and the same query with $c$ replacing $a$. It suffices to fix some reference set of size $t-1$, which we call $y_1, \ldots, y_{t-1}$, and take all queries that contain these elements. If $a$, $b$, and $c$ are all from $X \setminus (S \cup L)$, with $a < b < c$, then as at most $t-1$ of the fixed elements $\{y_i\}$ are less than $a$, the query that consists of these reference elements, $a$, $b$, enough elements from $S$ to ensure that $a$ is the $t^\text{th}$ smallest and the remaining elements from $L$ will return $a$.\\

Provided that $a$ and $c$ are not in this fixed reference set, as we include all possible queries containing the $y_1, \ldots, y_{t-1}$, the user will also see the result of the same query with $a$ replaced by $c$, and will then be able to conclude that $a$ is not adjacent to $c$. He will be able to do this for all the remaining $c$ in $X \setminus (S \cup L)$, and thus be able to eliminate all of $a$'s non-neighbours. He will be left with $a$'s neighbours, and thus be able to deduce them.\\

If $a$ or $c$ are in the reference set, then this will not work. However we can circumvent this by carrying out three sets of queries, with disjoint fixed reference sets each time. Then, for any non-adjacent pair $a$ and $c$, one of the three sets of queries must have neither $a$ nor $c$ in its reference set. Hence in at least one set of queries the fact that $a$ and $c$ are not adjacent will be revealed. Since the user will discover this for all the elements in $X \setminus (S \cup L)$ which are not adjacent to $a$, he will be left with those that are, and thus will be able to determine the ordering using Observation \ref{Adjgivesorder}.\\

We note that this takes $3 \binom{n-(t-1)}{k - (t-1)}$, which is again $O(n^{k-(t-1)})$ queries, but with a much improved constant factor over the previous recursive structure. It is also computationally less complicated, taking approximately $n^2$ calculations to eliminate all the non-adjacencies, and then a linear number of steps to rebuild the ordering, while the previous method required potentially reconstructing all the $\binom{n}{k}$ queries, a considerably larger task.

\subsection{Multiple Output Scales}

The question now arises of which of these algorithms also works in the Multiple-Output case, where the user is given a $(k, t_1, \ldots, t_s)$ scale as in the online analogue, and asked to determine the ordering. The authors note that the recursive algorithm was computationally and conceptually complicated even in the singleton output case -- although we suspect it is possible to also use it for multiple output scales, the case analysis would make such an approach exceptionally tedious. However the adjacency based algorithm works almost immediately with almost no modifications. Since it just relies on showing that certain things would have to be included in the output if certain adjacencies existed, the same remains true even if the scale returns more elements. The only change required is that the fixed reference set is of a different size -- before it contained at most $t$ members, now it must contain at most the maximum of $t_s - 1$ and $k-t_1$ elements. This just ensures that the fixed reference elements don't take up so much of the scale that it's possible for every query containing them to only give an output consisting of members of the reference set. That established, the same analysis as before works, and so such an instrument can determine the ordering in at most the following number of queries

\begin{equation*}
3 \max\left\{\binom{n-(t_s-1)}{k-(t_s-1)}, \binom{n-(k-t_1)}{t_1}\right\}
\end{equation*}

\section{Acknowledgements}

The first author acnowledges support through funding from NSF grant DMS~1301614 and MULTIPLEX grant no. 317532. The second author was supported by the  Balassi Institute, the Fulbright Commission, and the Rosztoczy Foundation. We are both grateful to Paul Balister for his careful proofreading and useful suggestions for improvements to this paper.

\section{Bibliography}


\begin{thebibliography}{99}

\bibitem{HKL10} D. Hannasch, S-J. Kim and I. McLaughlin,
Sorting with $k$-ary Comparisons,
\textit{University of Urbana-Champaign, Illinois, REGS programme} (2010), http://www.math.illinois.edu/REGS/reports10/HanKimMc10.pdf

\end{thebibliography}
\end{document}